\def\Z{\mathbb{Z}}
\def\R{\mathbb{R}}
\def\I{\mathcal{I}}
\DeclareMathOperator{\im}{Im}
\DeclareMathOperator{\dist}{dist}
\DeclareMathOperator{\spn}{span}
\DeclareMathOperator{\spt}{spt}
\newtheorem{theorem}{Theorem}
\newtheorem{lemma}[theorem]{Lemma}
\theoremstyle{remark}
\newtheorem*{example}{Example}
\numberwithin{equation}{section}
\begin{document}


\title{Topological states in the Kuramoto Model}
\author{Timothy Ferguson}

\begin{abstract}
The Kuramoto model is a system of nonlinear differential equations that models networks of coupled oscillators and is often used to study synchronization among the oscillators. In this paper we study steady state solutions of the Kuramoto model by assigning to each steady state a tuple of integers which records how the state twists around the cycles in the network. We then use this new classification of steady states to obtain a "Weyl" type of asymptotic estimate for the number of steady states as the number of oscillators becomes arbitrarily large while preserving the cycle structure. We further show how this asymptotic estimate can be maximized, and as a special case we obtain an asymptotic lower bound for the number of stable steady states of the model.
\end{abstract}

\maketitle

\smallskip
\noindent \footnotesize\textbf{Keywords.} Kuramoto model, fixed points, graph topology, Weyl asymptotic
\\
\\
\smallskip
\noindent \footnotesize\textbf{AMS subject classifications.} 34C15, 34D06



\section{Introduction}

The Kuramoto model is used to describe the behavior of a network of coupled oscillators. If we let $\theta_i$ denote the position of the $i$th oscillator, $\omega_i$ its natural frequency, and $\gamma_{ij}$ the strengh of the coupling between the $i$th and $j$th oscillators, then the Kuramoto model on a network of $N$ coupled oscillators is given by the system of $N$ nonlinear differential equations
\begin{align} \label{model}
\frac{d\theta_i}{dt} = \omega_i - \sum_{j=1}^N \gamma_{ij} \sin(\theta_i - \theta_j) \quad \text{for} \quad i \in \{1,\dots,N\}.
\end{align}
This model was first proposed by Kuramoto in \cite{MR762432} to study the phenomenon of synchronization. There are many interesting systems exhibiting synchronization including, for example, the flashing of interacting fireflies \cite{Buck1988SynchronousRhythmicFlashing}, the phase synchronization of arrays of lasers \cite{PhysRevA.52.4089}, and the synchronization of pacemaker cells in the heart \cite{peskin75}. For a review of the history of the Kuramoto model and a more complete list of synchronization phenomena see \cite{MR1783382}.

Unless stated otherwise we will work with the simplified model
\begin{align} \label{simplified model}
\frac{d\theta_i}{dt} = - \sum_{j=1}^N \gamma_{ij} \sin(\theta_i - \theta_j) \quad \text{for} \quad i \in \{1,\dots,N\}
\end{align}
obtained from $\eqref{model}$ by setting $\omega_i = 0$. Since $\eqref{simplified model}$ is a gradient system on the compact $n$-torus, we see that every solution converges to a steady state solution and for this reason steady state solutions are of key interest. (Note that although $\eqref{model}$ is also a gradient system the phase space is not compact.) A particular class of steady state solutions which has received a lot attention \cite{MR2220552,MR3388491,MR3129708,MR3415044,MR3600363} are the so called twisted states. These are steady states for which the positions of successive oscillators wrap around the unit circle. For example, if we consider a cyclic network with unit edge weights, then one can check that $\theta_i = \frac{2\pi qi}{N} \pmod{1}$ is a steady state solution of $\eqref{simplified model}$ for any integer $q$. This solution wraps around the unit circle $q$ times and is referred to as a $q$-twisted state. These steady states were first introduced by Wiley, Strogatz, and Girvan in \cite{MR2220552}. Since then their existence and stability on various networks and under various perturbations has received a lot of attention. After introducing $q$-twisted states in \cite{MR2220552} they studied their stability properties in the continuum Kuramoto model for $k$-nearest neighbor graphs. They found a sufficient condition for linear stability. In \cite{MR3388491}, Girnyk, Hasler, and Maistrenko again studied $q$-twisted states for $k$-nearest neighbor graphs. They showed that if a $q$-twisted state is stable for the continuum model then it is for the finite model for sufficiently large $N$. Furthermore they numerically investigated a new class of steady state solutions which they called multi-twisted states. These are states for which angle differences between neighbors are not restricted to only $2\pi q/N$. On two different sectors of the network, the angle differences between neighbors are allowed to be close to $2\pi q/N$ and $-2\pi q/N$ respectively with intermediate values in between. They further numerically showed that the number of such states appears to grow exponentially in $N$. In \cite{MR3129708}, Medvedev showed the existence of $q$-twisted states in small world graphs. He further showed that random long-range connections tend to increase the chance of synchronization while simultaneously decreasing the chance of stability. He did this by showing that as he increased a parameter, which statistically would result in more long range connections, that the chance of synchronization went up while the chance of any given $q$-twisted state being stable went down. In \cite{MR3415044}, Medvedev and Tang investigated the existence and stability properties of twisted states for Cayley graphs. In particular they showed that there exist Cayley graphs with similar properties, such as the distribution of edges, which exhibit different stability for $q$-twisted states. Finally, in \cite{MR3600363}, Medvedev and Wright showed that linear stability in the continuum model implies stability under perturbations by weakly differentiable and bounded variation periodic functions.

In this paper we investigate topological states for an arbitrary network. As a special case this will allow us to study the multiplicity of stable topological states. In section $\ref{main}$ we make definitions and state our main results without proof. We leave the proofs for an appendix at the end. For our first result we demonstrate a one-to-one correspondence between steady states of $\eqref{model}$ and the lattice points in a certain set, $W(A)$, depending on the underlying network. Here both $W(A)$ and the lattice have dimension $c = |E| - |V| + 1$ which arises from the cycle structure of the graph. Furthermore each component of a lattice point in the set records how the corresponding steady state twists around a particular cycle in the graph. This is the motivation for referring to the steady states of $\eqref{model}$ as topological states. It is not surprising that the number of steady states should depend on the cycle structure. For example, consider the continuum Kuramoto model on a figure eight quantum graph. In this case a steady state corresponds a local minimum of a certain functional. Since a solution is a continuous function on each loop, each loop must have at least one local minimum. However these loops are not homotopic so generically we expect that their local minimums are distinct. Thus we expect to have a local minimum for each loop which in our example is two.

In our second result we look at the how the set $A$ depends on the network. In our third result we show that if we allow the number of vertices in our network to increase while preserving the network structure, that the number of steady states has a "Weyl" type asymptotic estimate $|W(A)| N^c$. This result is surprising since this would suggest that the number of steady states increases as we add more cycles, equivalently edges, to the network. However Taylor showed in \cite{MR2878025} that the complete graph has only the trivial stable steady state $\theta_i = 0$. Therefore we see that either the asymptotics of the number of stable steady states with respect to the number of vertices and edges does not commute or that $|W(A)|$ must decrease rapidly as $c$ increases.

In our fourth and last main result we show how to maximize $|W(A)|$. We finish by applying our results to networks with one and two cycles.



\section{Definitions and main results} \label{main}

In this section we make definitions that will be used throughout the rest of the paper. Let $G = (V,E,\Gamma)$ be a simple weighted graph. We will assume for simplicity that for every edge $e$ there exists a spanning tree $T$ which does not contain $e$. In other words we don't have a graph with an attached tree. We will let $\spt(G)$ denote the set of spanning trees of $G$. Given such a graph we can order and assign an orientation to the edges and therefore define the corresponding incidence matrix $B : \R^E \rightarrow \R^V$ by
\begin{align*}
B_{ie} =
\begin{cases}
1 & \mbox{if the vertex $i$ is the head of the edge $e$,}
\\
-1 & \mbox{if the vertex $i$ is the tail of the edge $e$,}
\\
0 & \mbox{otherwise.}
\end{cases}
\end{align*}
The integral vectors in the kernel of $B$ form an additive group called the cycle space since each such vector corresponds to a cycle in the graph $G$. Let $v_1,\dots,v_c$ denote a basis for the cycle space. It is well known that $c = |E| - |V| + 1$. It is not hard to see that any two such basis can be obtained from the other by an element of $SL_c(\Z)$, the special linear group of degree $c$ over $\Z$.

If $e$ is an edge with head $i$ and tail $j$ we define the angle difference $\theta_e = \theta_i - \theta_j$. When we consider fixed points of $\eqref{model}$ and $\eqref{simplified model}$ we will restrict these angle differences so that $\theta_e \in I_e + 2\pi \Z$ for some choice of sets $I_e$. Throughout we will assume that each set is such that the restriction of the sine function to $I_e$, $\sin_{I_e}$, is injective. Given a collection of such sets $\mathcal{I} = \{ I_e \}_{e \in E}$ we define the function $\sin_\I$ to be the $|E|$ vector valued sine function restricted to the set $I_{e_1} \times \dots \times I_{e_{|E|}}$. Finally if $\gamma_e$ is the weight of the edge $e$ we let $D_\gamma$ be the diagonal matrix with diagonal entries $\gamma_{e_1}, \dots, \gamma_{e_{|E|}}$.

We can now define two functions on $\R^c$ by
\begin{align*}
L(\alpha) = D_\gamma^{-1} B^{-1} {\bf \omega} + \sum_{i=1}^c \alpha_i D_\gamma^{-1} v_i \quad \text{and} \quad W(\alpha) = \frac{1}{2\pi} (\langle v_1, \sin_\mathcal{I}^{-1} L(\alpha) \rangle, \dots,  \langle v_c, \sin_\mathcal{I}^{-1} L(\alpha) \rangle)
\end{align*}
where $B^{-1}$ is the pseudoinverse of $B$. We further let $A$ denote the subset of $\R^c$ on which $\sin_\mathcal{I}^{-1} L(\alpha)$ is well-defined. In fact $A$ is a polytope, and we will study it further in Theorem $\ref{polytope}$. One will notice that both $L$ and hence $W$ are injective on $A$. We can now state our first main result.


\begin{theorem} \label{equivalence}
There is a one-to-one correspondence between the fixed points of $\eqref{model}$ with $\theta_e \in I_e + 2\pi \Z$ and the lattice points in $W(A)$.
\end{theorem}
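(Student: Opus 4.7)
The plan is to convert the Kuramoto fixed point equation into edge coordinates, parameterize its real solutions by $\alpha \in A$, and then use the integer cycle-closure constraints to pick out exactly those $\alpha$ for which $W(\alpha) \in \Z^c$.

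\textbf{Forward direction.} Given a fixed point $\theta \in (\R/2\pi\Z)^V$ of \eqref{model} with $\theta_e \in I_e + 2\pi\Z$, I would first rewrite the stationarity condition in edge form as $\omega = B D_\gamma \sin(\theta_E)$, where $\theta_E = B^T\tilde\theta_V$ for any lift $\tilde\theta_V \in \R^V$. Since $\omega \in \operatorname{range}(B)$ and $\ker B = \operatorname{span}(v_1,\dots,v_c)$, applying the pseudoinverse yields a unique $\alpha \in \R^c$ with $D_\gamma \sin(\theta_E) = B^{-1}\omega + \sum_i \alpha_i v_i$, i.e.\ $\sin(\theta_E) = L(\alpha)$. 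Injectivity of $\sin_{I_e}$ forces $\alpha \in A$, and the unique representatives $\bar\theta_e \in I_e$ satisfy $\bar\theta_E = \sin_\mathcal{I}^{-1}L(\alpha)$ with $\theta_E - \bar\theta_E \in 2\pi\Z^E$. Since $\theta_E$ lies in $\operatorname{range}(B^T) = (\ker B)^\perp$, one has $\langle v_i,\theta_E\rangle = 0$, whence $\langle v_i,\bar\theta_E\rangle \in 2\pi\Z$; equivalently, $W(\alpha) \in \Z^c \cap W(A)$.

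\textbf{Reverse direction.} Given a lattice point $n \in W(A)\cap\Z^c$, the injectivity of $L$ (and hence of $W$) on $A$ noted in the statement produces the corresponding $\alpha$ and $\bar\theta_E = \sin_\mathcal{I}^{-1}L(\alpha)$. To promote $\bar\theta_E$ to a consistent assignment of vertex phases I need $k \in \Z^E$ solving $\langle v_i, k\rangle = -n_i$ for $i=1,\dots,c$; then $\theta_E := \bar\theta_E + 2\pi k$ lies in $\operatorname{range}(B^T)$, and any preimage $\tilde\theta_V$ under $B^T$ descends to a fixed point $\theta \in (\R/2\pi\Z)^V$ satisfying $\omega = BD_\gamma\sin(\theta_E)$ and $\theta_e \in I_e + 2\pi\Z$. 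Taking $v_1,\dots,v_c$ to be the fundamental cycle basis relative to some spanning tree, the matrix $V^T:\Z^E\to\Z^c$ restricts to the identity on non-tree edge coordinates, so the required $k$ exists.

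\textbf{Main obstacle.} The crux is the integrality equivalence: that $W(\alpha) \in \Z^c$ is both necessary and sufficient for $\bar\theta_E$ to lift to vertex phases modulo $2\pi$. Necessity is the cycle orthogonality above; sufficiency requires that the integer cycle lattice $\ker B \cap \Z^E$ be saturated in $\Z^E$ and that $V^T:\Z^E\to\Z^c$ be surjective, both consequences of the total unimodularity of a graph incidence matrix. This same unimodularity identifies $2\pi\Z^E \cap \operatorname{range}(B^T) = 2\pi B^T \Z^V$, which is exactly what is needed to check that distinct fixed points in $(\R/2\pi\Z)^V$ produce distinct values of $\alpha$ (hence distinct $W(\alpha)$), completing the bijection. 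The remainder is routine bookkeeping with pseudoinverses and lifts.
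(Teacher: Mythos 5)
Your proposal is correct and follows essentially the same route as the paper: rewrite the fixed point equation in edge coordinates, parameterize solutions by $\alpha \in A$ via the kernel of $B$, and reduce existence of a consistent lift to the integrality of $W(\alpha)$, with the key lattice fact being surjectivity of $K \mapsto (\langle v_1,K\rangle,\dots,\langle v_c,K\rangle)$ on $\Z^E$. The only difference is cosmetic: the paper proves that surjectivity (its Lemma \ref{lattice}) by induction on the number of cycles, whereas you read it off directly from a fundamental cycle basis and unimodularity, and you make explicit the injectivity check ($2\pi\Z^E \cap \operatorname{range}(B^\top) = 2\pi B^\top \Z^V$) that the paper compresses into the remark that $\theta$ is determined up to translation by $\alpha$.
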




\begin{example}
Let $G$ be the expanded diamond graph with unit edge weights shown in Figure 1 below. Also let $I_e = [-\pi/2,\pi/2]$ and
\begin{gather*}
v_1 = (0, -1, 1, 0, 0, 0, 0, 0, 0, 0, 1, -1, 1, 1, 1, 1, 1, 1, 1, 1), \\
v_2 = (-1, 0, 1, -1, -1, -1, -1, -1, -1, -1, 0, -1, 0, 0, 0, 0, 0, 0, 1, 1), \\
K = (1, -1, 0, 0, 0, 0, 0, 0, 0, 0, 1, 0, 0, 0, 0, 0, 0, 0, 0, 0).
\end{gather*}
We will see the meaning of $K$ momentarily. From Figure 1 we see that $W(A)$ contains the lattice point $(2,-1)$ and therefore by Theorem $\ref{equivalence}$ there is a corresponding fixed point of $\eqref{model}$. For simplicity we will work with $\eqref{simplified model}$. We will compute this fixed point which will also allow us to sketch the proof of Theorem $\ref{equivalence}$ which will be given in complete detail in the appendix.

To begin we rewrite the fixed point equation of $\eqref{simplified model}$ as $B \sin B^\top \theta = 0$. Since $v_1$ and $v_2$ span the kernel of $B$ we see that $\sin B^\top \theta = L(\alpha,\beta)$ for some $(\alpha,\beta) \in A$ and therefore that $B^\top \theta = \sin_\I^{-1} L(\alpha,\beta) - 2\pi K$ for some $K \in \Z^{20}$. If we let $B^{-\top}$ denote the pseudo inverse of $B^\top$ then we obtain the formula $\theta = B^{-\top}(\sin_\I^{-1} L(\alpha,\beta) - 2\pi K)$. Thus it remains to compute $(\alpha,\beta)$ and $K$. Since $v_1$ and $v_2$ are orthogonal to the image of $B^\top$ we see that $W(\alpha,\beta) = (\langle v_1,K \rangle,\langle v_2,K \rangle) = (1,0)$. This works with our choice for $K$, and we numerically solve to find that $(\alpha,\beta) = (0.994148 \dots, -0.779356 \dots)$. This finally results in
\begin{gather*}
\theta = (0.724472, 1.61811, 2.51175, 3.40538, 4.29902, 5.19266, 6.0863, 0.696749, 1.59039, 3.05294, \\ 4.5155, 5.97806, 1.15743, 2.61999, 4.08254, 5.5451, 0.94095, 1.15743, 1.37391)
\end{gather*}
which is displayed in Figure 1 below.
\begin{figure}[H]
\captionsetup{font=scriptsize}
\centering
\begin{subfigure}{.5\textwidth}
  \centering
  \includegraphics[width=.65\linewidth]{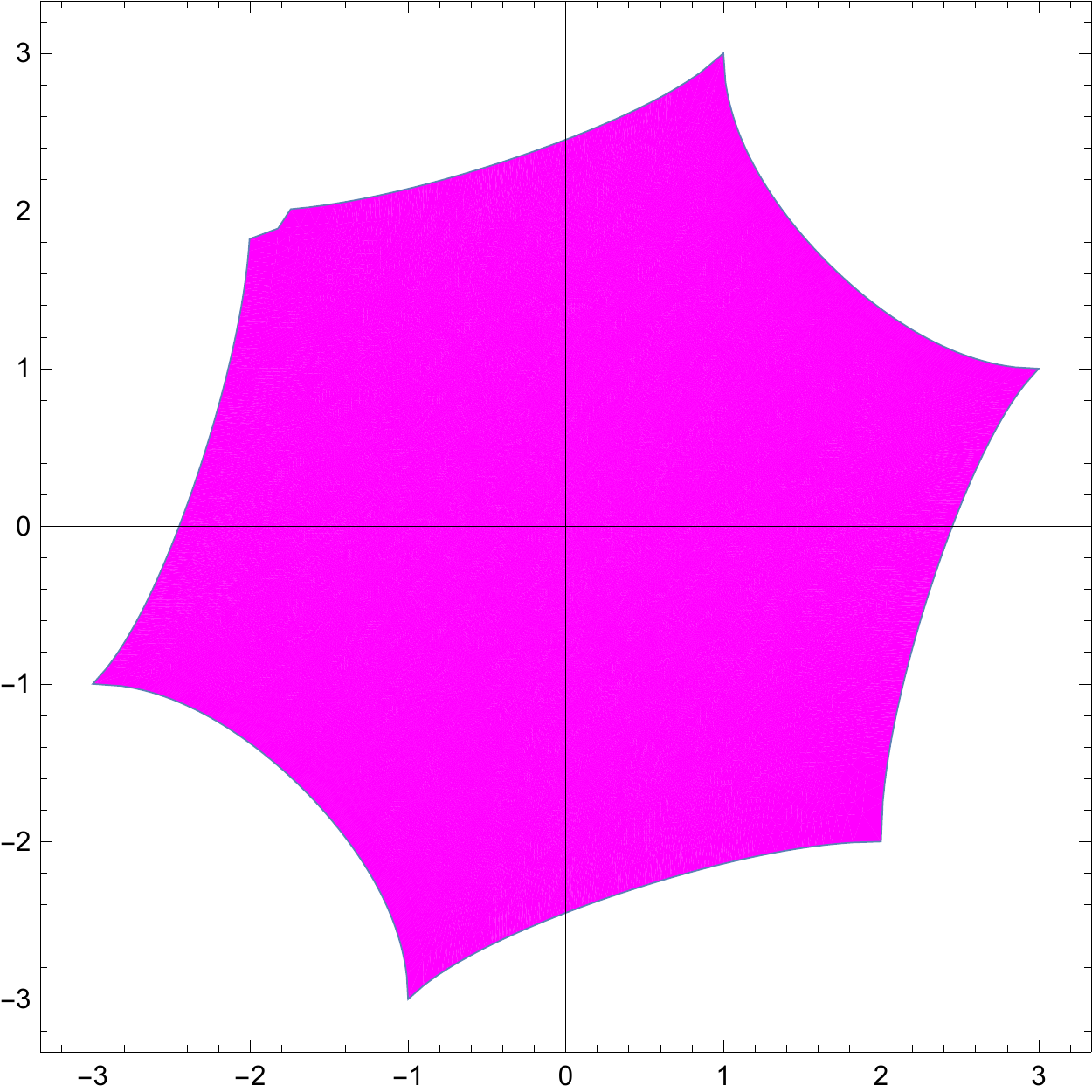}
\end{subfigure}%
\begin{subfigure}{.5\textwidth}
  \centering
  \includegraphics[width=1\linewidth]{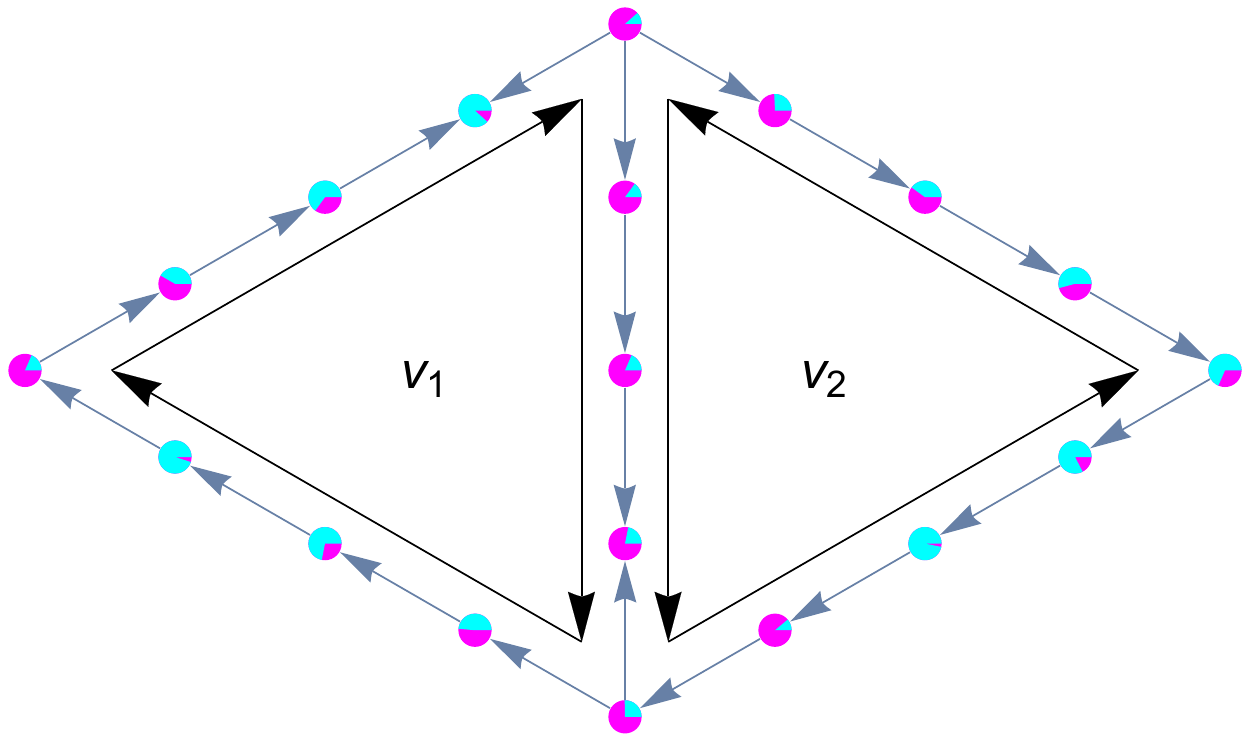}
\end{subfigure}
\caption{The plot of $W(A)$ contains the lattice point $(2,-1)$. The angles of the corresponding fixed point are indicated by the cyan sectors at each vertex. Notice that the angles have a net winding of one and zero as we move around the cycles $v_1$ and $v_2$ respectively. This is in agreement with our lattice point $(2,-1)$.}
\end{figure}
Furthermore since we chose $I_e = [-\pi/2,\pi/2]$ and positive edge weights we know that our fixed point must be stable which we illustrate in Figure 2 below. Of course this holds in general.
\begin{figure}[H]
\captionsetup{font=scriptsize}
\centering
\includegraphics[width=.5\linewidth]{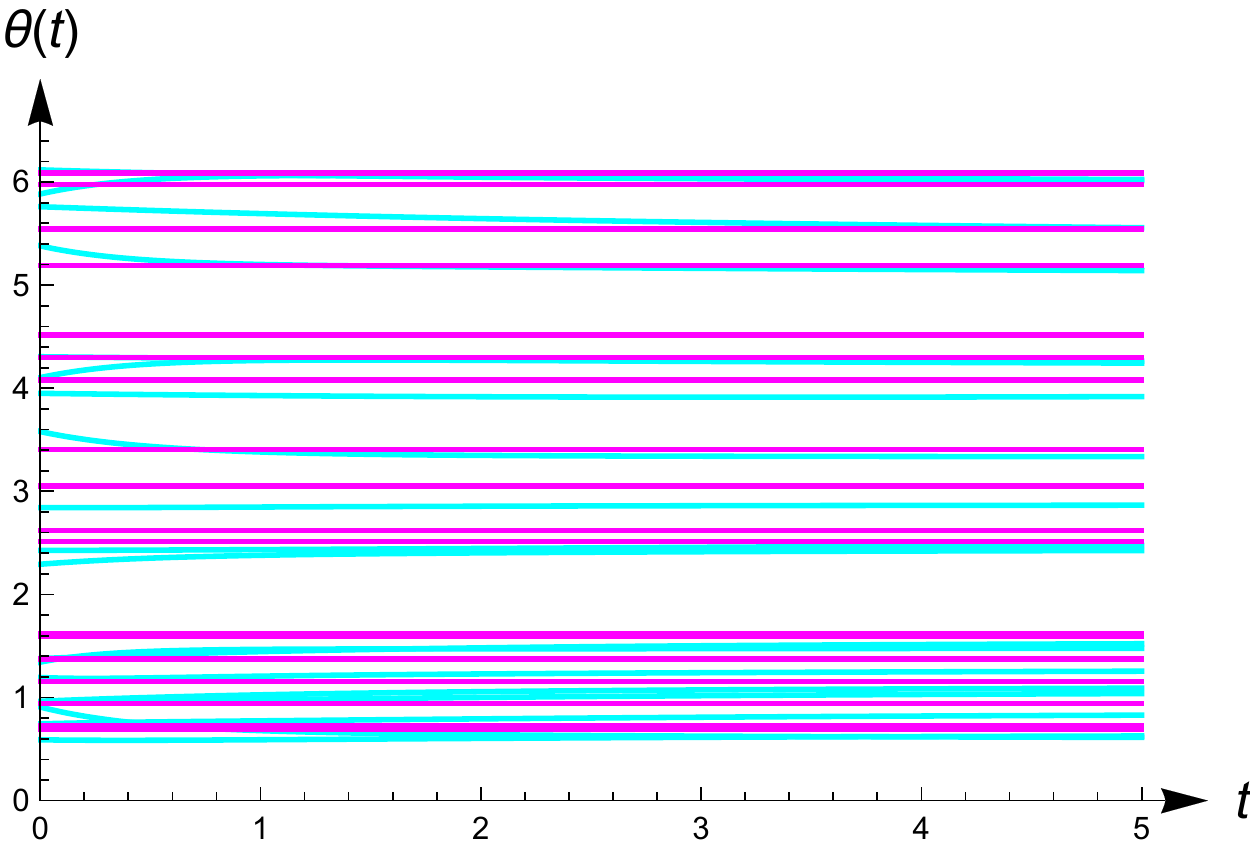}
\caption{The plot of the evolution of the Kuramoto model with initial data which is a perturbation of our fixed point by a random vector with components bounded by $0.25$.}
\end{figure}
It is worth noting that since $W$ is always injective on $A$ we can in fact associate a unique $\alpha \in A$ to any steady state $\theta$ of $\eqref{model}$ via the equation $L(\alpha) = \sin(B^\top \theta)$. We will use this association in Theorem 5 in Section 3.
\end{example}


One will notice that the fixed points of $\eqref{model}$ with $\theta_e \in I_e +2\pi \Z$ are completely independent of our choice of cycle basis whereas $W(A)$ explicitly depends on it. We will briefly explain why the number of lattice points in $W(A)$ is independent of the choice of cycle basis directly from the definition of $W$ and $A$. To do this let $G$ be a graph with two cycle basis $v_1,\dots,v_c$ and $v_1',\dots,v_c'$. Let $A$ and $A'$, $L$ and $L'$, and $W$ and $W'$ be the corresponding polytopes and functions respectively. As mentioned earlier these two cycle basis are related by an element of $SL_c(\Z)$. In particular there exists an $M \in SL_c(\Z)$ such that $(v_1',\dots,v_c') = (v_1,\dots,v_c)M$. From this we see that $L'(\alpha) = (v_1',\dots,v_c') \alpha = (v_1,\dots,v_c)M\alpha = L(M\alpha)$ and therefore that $L'(M^{-1} A) = L(A) \subseteq \im(\sin_\I)$ which implies that $M^{-1}A \subseteq A'$. Reasoning again in this way we are ultimately lead to the identities $A = MA'$ and $A' = M^{-1}A$. Just like $L$ and $L'$ are related by $M$ we see that $W'(\alpha) = L'(\alpha)(v_1',\dots,v_c') = L(M\alpha)(v_1,\dots,v_c)M = W(M\alpha)M$. From this we are finally lead to the identity $W'(A') = W(A)M$. Since $M$ is an element of $SL_c(\Z)$ we see that it is a bijection between the lattice points in $W(A)$ and $W'(A')$. Thus again we see that our result is independent of the choice of cycle basis.

Along the same lines we see that the polytope $A$ is not well-defined until we specify a cycle basis. However, as we have already seen, any two polytopes are related by an element of the special linear group and therefore there ought to be properties of $A$ which are independent of the particular choice of cycle basis. We give an example of such a property in Theorem $\ref{polytope}$, but first we need to discuss the smoothing of a graph.

If a graph has a 2-valent vertex it is possible to obtain a new graph by removing this vertex and replacing the two edges with a single edge. When this is done we say that the vertex has been smoothed. We can repeat this process and in so doing we obtain a new graph which potentially contains self loops and edges with multiplicity greater than two. Our only restriction is that if our graph is only a single self loop we do not remove that 2-valent vertex.


\begin{theorem} \label{polytope}
Suppose that $\sin(I_e) = [-1,1]$. Then the number of faces of the polytope $A$ is equal to twice the number of edges of the graph obtained from $G$ by smoothing all 2-valent vertices.
\end{theorem}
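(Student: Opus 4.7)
Since $\sin(I_e) = [-1,1]$ for every $e$, the polytope takes the explicit form
\[
A = \bigl\{ \alpha \in \R^c : \bigl| L(\alpha)_e \bigr| \leq 1 \text{ for every } e \in E \bigr\} = \bigcap_{e \in E} \bigl( H_e^+ \cap H_e^- \bigr),
\]
where $H_e^{\pm} = \{ \pm L(\alpha)_e \leq 1 \}$ are closed half-spaces. Each $L(\alpha)_e$ is an affine function of $\alpha$, and the two bounding hyperplanes of $H_e^+$ and $H_e^-$ are parallel with common normal direction proportional to the cycle-indicator vector $\pi(e) := ((v_1)_e, \ldots, (v_c)_e) \in \Z^c$. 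The theorem then reduces to identifying which of the $2|E|$ candidate hyperplanes actually appear as facets of $A$.

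The core of the plan is to prove that two edges $e, e'$ give parallel constraints, i.e.\ $\pi(e) \parallel \pi(e')$, if and only if they are identified when all $2$-valent vertices are smoothed, and then to verify that each such parallel class contributes exactly two facets. For the forward direction, at any $2$-valent vertex $v$ with incident edges $e, e'$, the vertex-$v$ component of $Bv_i = 0$ forces $(v_i)_e = \pm (v_i)_{e'}$ for every basis cycle; an induction along a chain of $2$-valent vertices then propagates this to $\pi(e) = \pm \pi(e')$. For the reverse direction I would work in the fundamental-cycle basis relative to a spanning tree $T$ (which, by the standing hypothesis, can be chosen to avoid any prescribed edge). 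In this basis the entries of $\pi(e)$ lie in $\{-1,0,1\}$, so $\pi(e) \parallel \pi(e')$ forces $\pi(e) = \pm \pi(e')$, equivalently $\delta_e \mp \delta_{e'} \in \mathrm{Im}(B^\top)$. Unpacking the latter as the existence of a potential $\phi : V \to \R$ with $B^\top \phi = \delta_e \mp \delta_{e'}$, one sees that $\phi$ must be locally constant on $G \setminus \{e, e'\}$ with a prescribed unit jump only across those two edges; combined with the no-pendant-tree hypothesis, this pins down a chain of $2$-valent vertices joining $e$ and $e'$.

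With the parallel classes identified (one per edge of the smoothed graph), within each class I would discard all but the tightest upper and the tightest lower bound, leaving two candidate facet hyperplanes per class. To show that both survivors are non-redundant facets, I would first note that $A$ is bounded, since the linear part of $L$ is injective and the preimage of the compact cube $[-1,1]^E$ is then compact; combined with the non-emptiness of the interior of $A$, one can exhibit a supporting point on each of the two surviving hyperplanes, so each contributes a facet. Summing over classes yields the advertised count of twice the number of edges of the smoothed graph.

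The hard part will be the reverse direction of the parallelism step, where the linear-algebraic identity $\pi(e) = \pm \pi(e')$ must be translated into the structural statement that $e$ and $e'$ lie on a common chain of $2$-valent vertices. The analysis of the potential $\phi$, and particularly the use of the no-pendant-tree assumption to rule out spurious solutions supported away from $e$ and $e'$, is the main technical content; the other steps are essentially bookkeeping.
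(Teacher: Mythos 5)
Your approach is genuinely different from the paper's: the paper argues by induction on the number of cycles, using a fundamental cycle basis to delete a non-tree edge and showing directly that each intersection $A \cap \{L(\alpha)_e = \pm 1\}$ has non-empty relative interior; you instead sort the $2|E|$ candidate hyperplanes into parallel classes and try to match classes with edges of the smoothed graph. Unfortunately the key lemma your plan rests on --- that $\pi(e) \parallel \pi(e')$ if and only if $e$ and $e'$ are identified by smoothing $2$-valent vertices --- fails in exactly the direction you flag as the hard one. Your potential argument is correct as far as it goes: a $\phi$ with $B^\top \phi = \delta_e \mp \delta_{e'}$ is locally constant on $G \setminus \{e,e'\}$, and since neither edge is a bridge this shows precisely that $\{e,e'\}$ is a $2$-edge cut. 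But a $2$-edge cut need not be a chain of $2$-valent vertices. Take two disjoint triangles $abc$ and $def$ joined by the edges $ad$ and $be$: this graph is simple and bridgeless, all of $a,b,d,e$ have degree $3$, so smoothing does not identify $ad$ with $be$; yet every circulation crosses the cut $\{ad,be\}$ with zero net flux, so $\pi(ad) = \pm\pi(be)$ and the four constraints $|L(\alpha)_{ad}| \le 1$, $|L(\alpha)_{be}| \le 1$ cut out only two hyperplanes. The correct invariant behind your parallel classes is the partition of $E$ by the relation ``$\{e,e'\}$ is a $2$-edge cut,'' which agrees with the smoothing classes only when every $2$-edge cut consists of two edges of a common subdivided edge. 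On the example above the smoothed graph has six edges while $A$ has at most ten facets, so the advertised count of twelve is not attainable by any proof; your analysis has in effect uncovered a hypothesis missing from the statement (the paper's own argument, which only shows each hyperplane $L(\alpha)_e = \pm 1$ supports a facet and defers the counting to the unproved remark that smoothing preserves the number of distinct components of $L$, quietly runs into the same obstruction).

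A second, smaller gap: to show that the two surviving hyperplanes of a class are facets you propose to ``exhibit a supporting point'' on each. A point of $A$ lying on a supporting hyperplane only shows the intersection is a non-empty face, which may have dimension less than $c-1$ (a hyperplane can support a polytope at a single vertex). What is needed is that the intersection has non-empty $(c-1)$-dimensional relative interior, and producing that is where essentially all of the work in the paper's inductive proof is spent.
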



Notice that if we smooth out all 2-valent vertices that we don't change the number of distinct components of $L(\alpha)$. Thus the theorem is simply saying that each of the hyperplanes $L(\alpha)_e = \pm 1$ results in a face of $A$.


\begin{example}
Consider the left most graph shown in Figure 1 below. After smoothing the 2-valent vertices 3, 4, 5, and 6 we obtain the right most graph. Thus according Theorem $\ref{polytope}$, $A$ has eight faces which we can easily confirm see from Figure 3.
\begin{figure}[H]
\captionsetup{font=scriptsize}
\center
\includegraphics{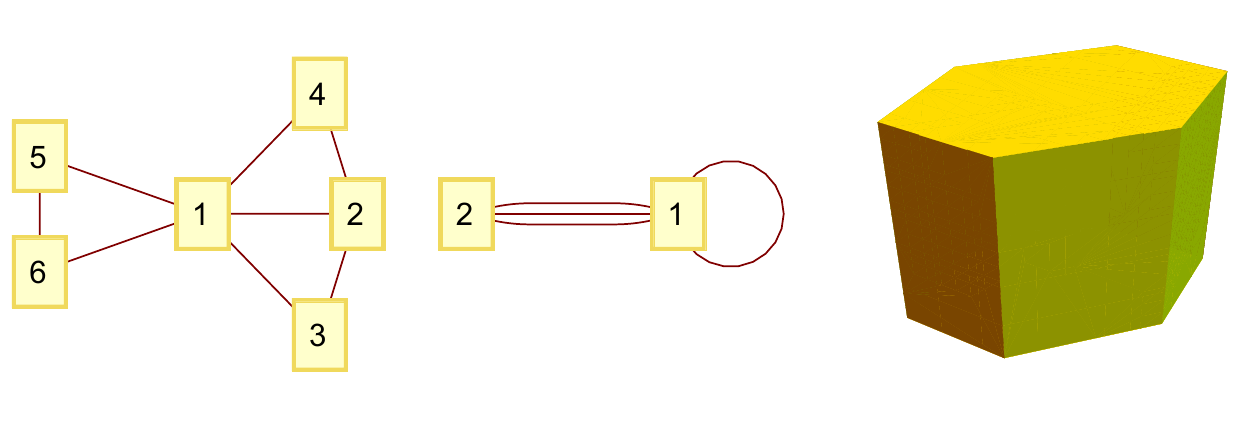}
\caption{$G$ before and after smoothing all 2-valent vertices and $A$}
\end{figure}
Furthermore, with a suitable choice of orientation and numbering of the edges we can choose the cycle basis $v_1 = (1,1,1,0,0,0,0,0)$, $v_2 = (0,0,1,1,1,0,0,0)$, and $v_3 = (0,0,0,0,0,1,1,1)$ for the left most graph. Thus $L(\alpha,\beta,\gamma) = (\alpha,\alpha,\alpha+\beta,\beta,\beta,\gamma,\gamma,\gamma)$ and we see that we get the eight hyperplanes $\alpha = \pm1$, $\beta = \pm 1$, $\gamma = \pm1$, and $\alpha + \beta = \pm 1$.
\end{example}


Now we consider an application of Theorem $\ref{equivalence}$. Let $G$ be a graph and let $G_M$ be a sequence of graphs such that each edge $e$ in $G$ is replaced by a path of edges of length $r_e M + o(M)$ for some $r_e > 0$. For each new edge $e'$ set $I_{e'} = I_e$ where $e$ is the edge that was replaced by the path of edges containing $e'$. Essentially we are doing the exact opposite of smoothing 2-valent vertices which is referred to as subdivision. An example of a few graphs in such a sequence is shown in Figure 4 below.
\begin{figure}[H]
\captionsetup{font=scriptsize}
\center
\includegraphics{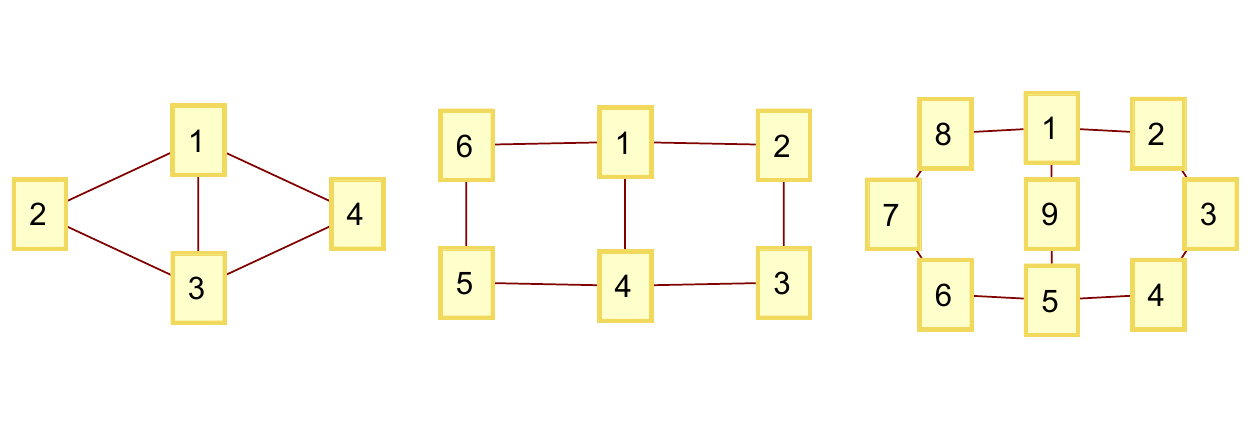}
\caption{$G_1$, $G_2$, and $G_3$}
\end{figure}


\begin{theorem} \label{asymptotics}
Let $G_M$ be a sequence of graphs as described above and let $D_r$ be the diagonal matrix with entries $r_e$. Then
\begin{align*}
\lim_{M \rightarrow \infty} \frac{\#(W_M(A_M) \cap \Z^c)}{M^c} = |W_r(A_0)|
\end{align*}
where
\begin{align*}
W_r(\alpha) = \frac{1}{2\pi} (\langle v_1, D_r \sin_\mathcal{I}^{-1} L_0(\alpha) \rangle, \dots, \langle v_c, D_r \sin_\mathcal{I}^{-1} L_0(\alpha) \rangle).
\end{align*}
In particular, if $r_e = 1$, then the limit equals $|W(A)|$.
\end{theorem}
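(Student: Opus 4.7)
\textbf{Proof proposal for Theorem \ref{asymptotics}.}

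The plan is to reduce the problem to a standard Jordan lattice-point count by showing that the constraint polytope $A_M$ is actually independent of $M$, while $W_M$ scales linearly in $M$. First I would fix a cycle basis $v_1,\dots,v_c$ of $G$ and lift it to a cycle basis $v_1^{(M)},\dots,v_c^{(M)}$ of $G_M$: replace the entry of $v_j$ on each original edge $e$ with the same entry on every edge of the subdivision path of length $n_e := r_e M + o(M)$, with the induced orientation. Because subdivision preserves $|E|-|V|+1=c$, these lifts still span $\ker B_M$. Using the natural compatibility conventions $\gamma_{e'}=\gamma_e$ and $I_{e'}=I_e$ along each path (as specified before the theorem), a direct computation gives $(L_M(\alpha))_{e'} = (L_0(\alpha))_e$ whenever $e'$ lies on the path replacing $e$. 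Consequently $L_M(\alpha) \in \im(\sin_\I)$ if and only if $L_0(\alpha) \in \im(\sin_\I)$, so $A_M = A_0$ for every $M$.

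Next I would expand the inner product defining $W_M$ using the path structure, obtaining
\[
(W_M(\alpha))_j = \frac{1}{2\pi} \sum_{e \in E} n_e (v_j)_e \, \sin_{I_e}^{-1}\!\bigl((L_0(\alpha))_e\bigr) = M\,(W_r(\alpha))_j + o(M),
\]
uniformly in $\alpha$ on the compact polytope $A_0$, since $\sin_\I^{-1} L_0$ is bounded on $A_0$ and $n_e/M \to r_e$. Hence $M^{-1} W_M(A_0) \to W_r(A_0)$ in Hausdorff distance.

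The conclusion then follows from a Jordan-type lattice-point count. Because $W_r$ is smooth and injective on $A_0$ (by the same argument sketched for $W$ after Theorem \ref{equivalence}), the image $W_r(A_0)$ is compact with piecewise smooth boundary, hence Jordan measurable. The uniform convergence above shows that the symmetric difference $(M^{-1}W_M(A_0)) \triangle W_r(A_0)$ is contained in a shrinking tubular neighborhood of $\partial W_r(A_0)$, whose Lebesgue measure tends to zero. Rewriting $\#(W_M(A_0) \cap \Z^c) = \#\bigl(\tfrac{1}{M}\Z^c \cap M^{-1} W_M(A_0)\bigr)$ and interpreting this as a Riemann sum for the indicator function on the refined lattice $\tfrac{1}{M}\Z^c$ gives $M^c |W_r(A_0)| + o(M^c)$, and dividing by $M^c$ yields the claimed limit. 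The case $r_e = 1$ then reduces to $W_r = W$ and $A_0 = A$. The main obstacle lies in this last step: one must control the boundary behavior uniformly, which is why the smoothness and injectivity of the limiting map $W_r$ are essential, so that $\partial W_r(A_0)$, as the image of $\partial A_0$ under a smooth injection, has a negligible thickening.
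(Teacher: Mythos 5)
Your proposal is correct and follows essentially the same route as the paper's proof: establish $A_M = A_0$, write $W_M = M W_r + o(M)$ uniformly on the compact polytope, rescale to the lattice $(\tfrac{1}{M}\Z)^c$, and interpret the count as a Riemann sum whose error is controlled by a shrinking tubular neighborhood of $\partial W_r(A_0)$. The extra detail you supply on lifting the cycle basis and on Jordan measurability simply fills in steps the paper states as "by construction."
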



Essentially since $W_M(A_M)$ expands proportionally to $M$ we rescale and instead count the number of elements of $(\frac{1}{M} \Z)^c$ in $W_M(A_M)/M$. Since the sequence of sets $W_M(A_M)/M$ approaches a "limiting set" we can estimate this number by $M^c$ times a Riemann sum of the indicator function for this "limiting set". In the case special case that $r_e = 1$ the "limiting set" is in fact $W(A)$.

Theorem $\ref{asymptotics}$ indicates that the main contribution to the number of fixed points of $\eqref{model}$ when $r_e = 1$ comes from $|W(A)|$. Therefore if we seek a large number of fixed points we ought to try to maximize $|W(A)|$. The only way we can do this without changing the graph is by changing the intervals $I_e$. This motivates the following theorem.


\begin{theorem} \label{angle differences}
The maximum value of $|W(A)|$ is achieved if we choose
\begin{align*}
I_e =
\begin{cases}
[-\pi/2,\pi/2] & \mbox{if $\gamma_e > 0$,}
\\
[\pi/2,3\pi/2] & \mbox{if $\gamma_e < 0$,}
\end{cases}
\quad \text{or} \quad
I_e =
\begin{cases}
[\pi/2,3\pi/2] & \mbox{if $\gamma_e > 0$,}
\\
[-\pi/2,\pi/2] & \mbox{if $\gamma_e < 0$.}
\end{cases}
\end{align*}
In particular, if all of the edge weights are positive, then the maximum is achieved if all of the angle differences are less than or equal to $\pi/2$ or all greater than or equal to $\pi/2$.
\end{theorem}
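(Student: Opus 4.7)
The plan is to express $|W(A)|$ as an integral, identify the Jacobian as a weighted Gram matrix of the cycle basis, and then combine the cycle-space matrix--tree theorem with a triangle inequality.

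Since $W$ is injective on $A$, the change of variables formula gives $|W(A)| = \int_A |\det J(\alpha)|\, d\alpha$, where $J(\alpha) = \partial W/\partial \alpha$. Let $\phi_e(\alpha) = \sin_{I_e}^{-1}(L(\alpha)_e)$ and let $s_e \in \{+1,-1\}$ denote the sign of $\cos\phi_e$ on $I_e$, i.e.\ $s_e = +1$ when $I_e \subset [-\pi/2,\pi/2]+2\pi\Z$ and $s_e = -1$ when $I_e \subset [\pi/2,3\pi/2]+2\pi\Z$. Differentiating $\sin\phi_e = L(\alpha)_e$ and setting $\rho_e(\alpha) = \sqrt{1-L(\alpha)_e^2}$ gives $\partial \phi_e/\partial \alpha_j = s_e(v_j)_e/(\gamma_e\rho_e)$, so
\begin{align*}
J(\alpha) = \frac{1}{2\pi}V^\top D(\alpha) V,\qquad D_{ee}(\alpha) = \frac{s_e}{\gamma_e\rho_e(\alpha)},
\end{align*}
where $V = [v_1,\dots,v_c]$ is the $|E|\times c$ cycle-basis matrix.

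The cycle-space matrix--tree identity then gives
\begin{align*}
\det\bigl(V^\top D(\alpha)V\bigr) = \sum_{T \in \spt(G)} \prod_{e \notin T}\frac{s_e}{\gamma_e\rho_e(\alpha)}.
\end{align*}
With $\sigma_e = \mathrm{sign}(\gamma_e)$, the $T$-th summand has sign $\prod_{e\notin T} s_e\sigma_e$ and magnitude $\prod_{e\notin T} 1/(|\gamma_e|\rho_e)$. The triangle inequality yields
\begin{align*}
|\det J(\alpha)| \le \frac{1}{(2\pi)^c}\sum_{T \in \spt(G)}\prod_{e \notin T}\frac{1}{|\gamma_e|\rho_e(\alpha)},
\end{align*}
with equality whenever the summands all share a common sign. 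Taking $s_e = \sigma_e$ for every $e$ (or $s_e = -\sigma_e$ for every $e$) makes $\prod_{e\notin T} s_e\sigma_e = (\pm1)^c$ independent of $T$, so this inequality is saturated pointwise on $A$.

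Finally we maximize over the intervals themselves. The right-hand bound depends on $\{I_e\}$ only through the region of integration $A$, since $\rho_e$ is a function of $L(\alpha)$ alone. Once the sign $s_e$ is fixed, enlarging $I_e$ to the length-$\pi$ interval of that sign makes $\sin(I_e) = [-1,1]$ and expands $A$ to the maximal polytope $\{|L(\alpha)_e|\le 1\text{ for all }e\}$; because the bounding integrand is non-negative this can only increase the integral. Thus both choices in the theorem simultaneously saturate the pointwise bound and maximize the domain, hence maximize $|W(A)|$. The main obstacles will be (i) verifying the cycle-space matrix--tree identity $\det(V^\top D V) = \sum_T\prod_{e\notin T} D_{ee}$ for an arbitrary integer cycle basis via a Binet--Cauchy expansion using that $\det(V_S)\in\{0,\pm 1\}$ on $|S|=c$ subsets, and (ii) bookkeeping the signs in the triangle-inequality step carefully, especially when some edge weights are negative.
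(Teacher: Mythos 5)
Your proof follows essentially the same route as the paper's: write $|W(A)| = \int_A |\det W'(\alpha)|\,d\alpha$, expand the determinant of the weighted cycle Gram matrix as a sum over spanning trees (the paper cites Theorem 2.8 of its reference rather than re-deriving it via Binet--Cauchy), and observe that the stated interval choices force all summands to carry a common sign so the triangle inequality is saturated. Your extra step checking that the full length-$\pi$ intervals also maximize the domain $A$ itself is a detail the paper leaves implicit, but it does not change the approach.
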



This theorem follows from the identity $|W(A)| = \int_A |\det(W')|$ and the fact that the given interval choices prevent cancellation among the terms in the determinant. In applications such as power systems the special case $I_e = [-\pi/2,\pi/2]$ is particularly useful. Furthermore if all of the edge weights are positive and all of the angle differences are less than $\pi/2$, the steady state is automatically stable. Thus our estimates can be used to obtain a lower bound for the number of stable steady states. More generally, if the intervals are chosen as in the left most case of Theorem $\ref{angle differences}$, then the steady state is automatically stable.



\section{Graphs with one cycle}

In this section we study single cycle graphs with the additional assumption that $\sin(I_e) = (-1,1)$. If $G$ has one cycle, then it is a ring. (Recall that we are assuming that $G$ doesn't have any edges which are contained in every spanning tree of $G$.) Thus we can orient all edges in one direction so that a cycle basis is given by the single vector $v = (1,\dots,1)$. Furthermore we note that $\alpha$ is a scalar and no longer a vector. Thus $L(\alpha) = (\alpha/\gamma_{e_1},\dots,\alpha/\gamma_{e_{|E|}})$ and $W(\alpha) = \sum_{e \in E} \sin_{I_e}^{-1} (\alpha/\gamma_e)$. In addition $A = (-\min_{e \in E} |\gamma_e|, \min_{e \in E} |\gamma_e|)$.


\begin{theorem} \label{extreme winding}
Define $I_+ = (0,\pi/2) \cup (\pi,3\pi/2)$ and $I_- = (\pi/2,\pi) \cup (3\pi/2,2\pi)$. Given a fixed point $\theta$ of $\eqref{simplified model}$ let $\alpha$ be such that $L(\alpha) = \sin(B^\top \theta)$. If one of the inequalities
\begin{align*}
W(\alpha) > \max_{e \in E} \biggr\{ \sin_{I_-}^{-1} \left( \frac{\alpha}{\gamma_e} \right) + \sum_{e' \ne e} \sin_{I_+}^{-1} \left( \frac{\alpha}{\gamma_{e'}} \right) \biggr\}, \quad \alpha \ge 0,
\\
W(\alpha) < \min_{e \in E} \biggr\{ \sin_{I_+}^{-1} \left( \frac{\alpha}{\gamma_e} \right) + \sum_{e' \ne e} \sin_{I_-}^{-1} \left( \frac{\alpha}{\gamma_{e'}} \right) \biggr\}, \quad \alpha \le 0,
\end{align*}
holds, then $\theta$ is unstable.
\end{theorem}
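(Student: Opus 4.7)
The plan is to reduce instability to a negative direction of the weighted-Laplacian quadratic form, translate the interval hypothesis into a count of ``bad'' edges, and then write down an explicit unstable direction. Linearizing $\eqref{simplified model}$ at the fixed point $\theta$ gives the Jacobian $-L_w$, where $L_w = BD_wB^\top$ and $w_e := \gamma_e\cos\theta_e$; via the identity $\delta^\top L_w\delta = \sum_e w_e(B^\top\delta)_e^2$, instability reduces to exhibiting $\xi\in\im(B^\top)$ with $Q(\xi):=\sum_e w_e\xi_e^2 < 0$. For the ring $\ker(B)=\spn\{(1,\dots,1)\}$, so $\im(B^\top)=\{\xi\in\R^E:\sum_e\xi_e=0\}$, and I will choose $\xi$ supported on just two edges of this space.

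The heart of the proof is a sign dictionary between $I_\pm$ and the sign of $w_e$. A short case analysis on $\mathrm{sgn}(\gamma_e)$ (given $\mathrm{sgn}(\alpha)$, which fixes the sign of $\sin\theta_e=\alpha/\gamma_e$ and thus the half-period of $\theta_e\bmod 2\pi$) shows that for $\alpha\ge 0$ one has $\theta_e\bmod 2\pi\in I_+\Rightarrow w_e>0$ and $\theta_e\bmod 2\pi\in I_-\Rightarrow w_e<0$; for $\alpha\le 0$ the roles of $I_+$ and $I_-$ are exchanged. Partitioning $E=S_+\sqcup S_-$ according to which interval contains $\theta_e$ gives
\[
W(\alpha)=\sum_{e\in S_+}\sin_{I_+}^{-1}\!\left(\tfrac{\alpha}{\gamma_e}\right)+\sum_{e\in S_-}\sin_{I_-}^{-1}\!\left(\tfrac{\alpha}{\gamma_e}\right).
\]
For $\alpha\ge 0$ one checks $\sin_{I_-}^{-1}(\alpha/\gamma_e)>\sin_{I_+}^{-1}(\alpha/\gamma_e)$ edge-by-edge, so $W(\alpha)$ is strictly increasing in $|S_-|$ when the other data are held fixed; the maximum appearing in the hypothesis is therefore exactly the largest value of $W(\alpha)$ compatible with $|S_-|\le 1$, and the hypothesis forces $|S_-|\ge 2$. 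By the dictionary, at least two distinct edges $e_1,e_2$ satisfy $w_{e_1},w_{e_2}<0$.

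Setting $\xi_{e_1}=1$, $\xi_{e_2}=-1$, and $\xi_e=0$ otherwise produces a vector in $\{\sum_e\xi_e=0\}=\im(B^\top)$ with $Q(\xi)=w_{e_1}+w_{e_2}<0$, so $\theta$ is unstable. The case $\alpha\le 0$ runs identically after swapping $I_+\leftrightarrow I_-$: the hypothesis now places $W(\alpha)$ strictly below every configuration with $|S_+|\le 1$, forcing $|S_+|\ge 2$, and the swapped dictionary again produces two edges with $w_e<0$. The only step demanding real care is the sign dictionary, which involves eight sub-cases in $(\mathrm{sgn}\,\alpha,\,\mathrm{sgn}\,\gamma_e,\,\text{quadrant of }\theta_e)$; once this is in place, the instability argument is a one-line Rayleigh-quotient computation. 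The single-cycle structure is used essentially to get the clean constraint $\sum_e\xi_e=0$; for multi-cycle graphs one would have to work inside $\im(B^\top)$ more carefully.
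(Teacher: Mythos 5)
Your proof is correct, and its combinatorial core coincides with the paper's: the sign dictionary relating membership of $\theta_e \bmod 2\pi$ in $I_{\pm}$ to the sign of $\gamma_e \cos\theta_e$ (with the roles of $I_+$ and $I_-$ swapping when $\alpha$ changes sign), together with the edge-by-edge inequality $\sin_{I_-}^{-1}(\alpha/\gamma_e) > \sin_{I_+}^{-1}(\alpha/\gamma_e)$, which shows that the displayed hypothesis forces at least two edges with $\gamma_e\cos\theta_e < 0$. Where you genuinely differ is the final step converting ``two negative-weight edges'' into instability: the paper simply invokes Theorem 2.9 of \cite{MR3513871}, whereas you prove the implication from scratch for the ring by writing the Jacobian as $-BD_wB^\top$ with $w_e = \gamma_e\cos\theta_e$, identifying $\im(B^\top)$ with $\{\xi : \sum_e \xi_e = 0\}$, and exhibiting the explicit test vector supported on the two bad edges. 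This makes the result self-contained and makes visible exactly where the single-cycle structure enters, at the cost of a slightly longer argument than the paper's one-line citation. Two shared (and therefore forgivable) elisions: both arguments implicitly take all representatives $\theta_e$ in $(0,2\pi)$ so that the partition $E = S_+ \sqcup S_-$ and the comparison with the stated maximum are legitimate, and both ignore the degenerate boundary cases $\theta_e \in \{0,\pi\}$ (which force $\alpha = 0$, where the expressions $\sin_{I_\pm}^{-1}(0)$ are not defined since $0 \notin \sin(I_\pm)$).
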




\begin{proof}
By Theorem 2.9 of \cite{MR3513871} $\theta$ is unstable if there are at least two edges with $\gamma_e \cos \theta_e < 0$. As we shall see in the proof of Theorem $\ref{equivalence}$ we have that $\theta_e$ is in fact equal to $\sin_{I_e}^{-1} (\alpha/\gamma_e)$ modulo an integer multiple of $2\pi$. If $\alpha \ge 0$ one can easily check that $\sin_{I_+}^{-1}(\alpha/\gamma_e) < \sin_{I_-}^{-1}(\alpha/\gamma_e)$, $\gamma_e \cos(\sin_{I_+}^{-1}(\alpha/\gamma_e)) > 0$, and $\gamma_e \cos(\sin_{I_-}^{-1}(\alpha/\gamma_e)) < 0$. Thus the first inequality can only hold if $\gamma_e \cos \theta_e < 0$ for at least two edges which results in instability. The second inequality follows by the same reasoning since if $\alpha \le 0$ we have that $\sin_{I_+}^{-1}(\alpha/\gamma_e) < \sin_{I_-}^{-1}(\alpha/\gamma_e)$, $\gamma_e \cos(\sin_{I_+}^{-1}(\alpha/\gamma_e)) < 0$, and $\gamma_e \cos(\sin_{I_-}^{-1}(\alpha/\gamma_e)) > 0$.
\end{proof}



\begin{example}
Consider the cycle graph with unit edge weights and the $q$-twisted state $\theta_i = 2\pi iq/N$. If $q \in (N/4,N/2)$, then $\theta_e = 2\pi q/N \in (\pi/2,\pi) \subseteq I_-$. As we will see in the proof of Theorem $\ref{equivalence}$ we have that $B^\top \theta = \sin_\I^{-1} L(\alpha) - 2\pi K$ for some $K \in \Z^E$ hence $\alpha = \sin(2\pi q/N) \ge 0$. Therefore since $W(\alpha) = N \sin_{I_-}^{-1} (\alpha) > \sin_{I_-}^{-1} (\alpha) + (N -1) \sin_{I_+}^{-1} (\alpha)$ we see that this twisted state is unstable by Theorem $\ref{extreme winding}$. Similarly, if $q \in (N/2,3N/4)$, then $\theta_e \in (\pi,3\pi/2) \subseteq I_+$ and $\alpha = \sin(2\pi q/N) \le 0$. We again conclude that the twisted state is unstable since $W(\alpha) = N \sin_{I_+}^{-1} (\alpha) < \sin_{I_+}^{-1} (\alpha) + (N-1) \sin_{I_-}^{-1} (\alpha)$. It is worth noting that $\cos (2\pi q/N) < 0$ in both cases. One can in fact show directly that such a twisted state is unstable if and only if $\cos (2\pi q/N) < 0$.
\end{example}




\section{Graphs with two cycles}

In this section we will study two cycle graphs with the additional assumption that $\sin(I_e) = [-1,1]$. Due to the translation invariance of fixed points of $\eqref{simplified model}$ we observe that if we glue two graphs together at a single vertex that the number of fixed points of this graph is simply the product of the number of fixed points of the two subgraphs. Therefore we assume our graph is composed of two cycles which intersect nontrivially. Let $v_1$ and $v_2$ be a cycle basis. We can assume that all of the components of $v_1$ are zero or one and that the components of $v_2$ are zero or one outside of the intersection of the two cycles and negative one on this intersection. With this choice of cycle basis our assumption about our graph implies that $L(\alpha,\beta)_e \in \{\alpha,\beta,\alpha-\beta \}$ with $\alpha -\beta$ being realized for at least one edge. Thus we see that $A = \{ (\alpha,\beta) : |\alpha| \le 1, |\beta | \le 1, |\alpha - \beta | \le 1 \}$ which is shown in Figure 5 above.
\begin{figure}
\captionsetup{font=scriptsize}
\center
\includegraphics[width=150px,height=150px]{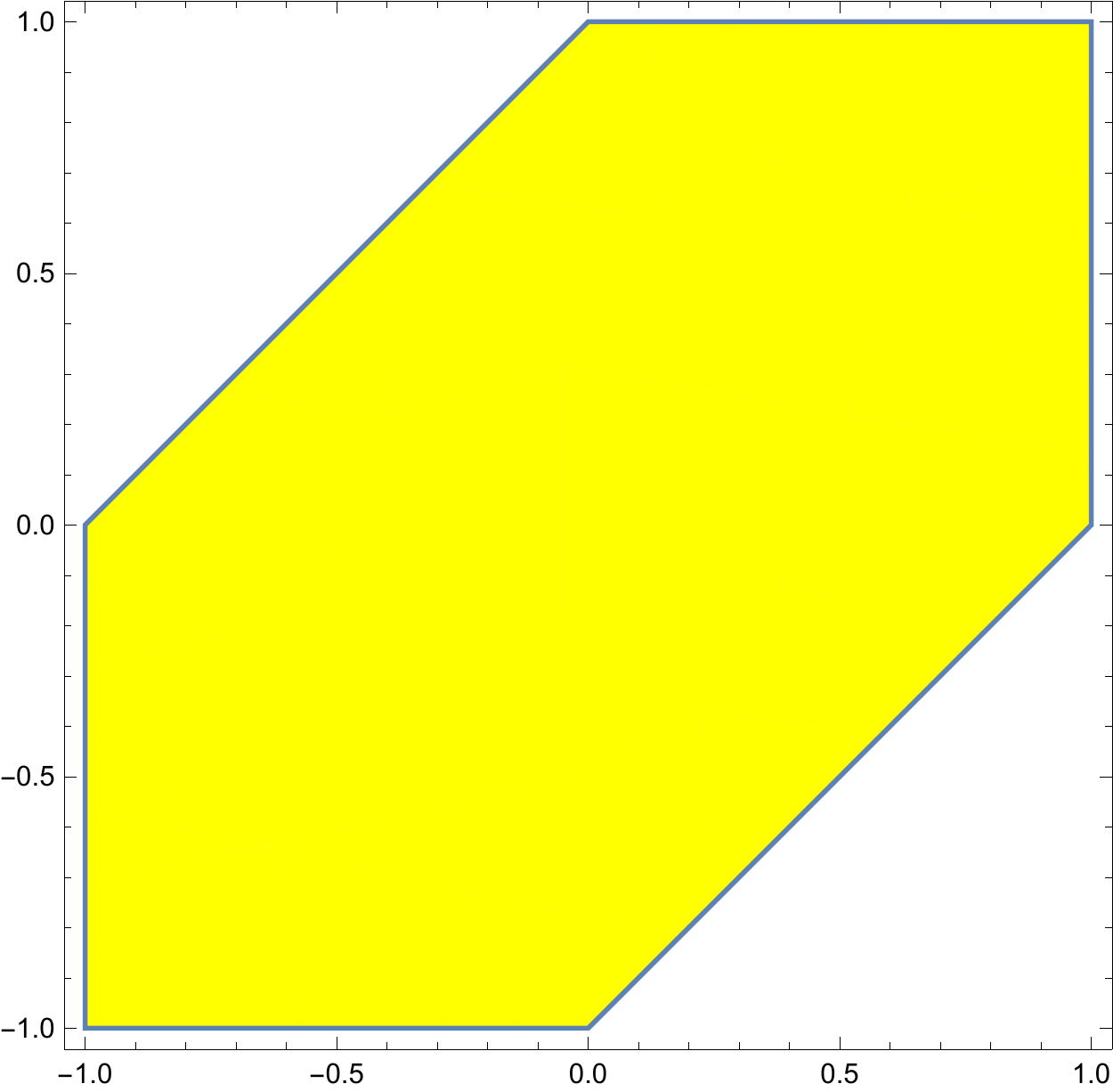}
\caption{$A$ for the generic two cycle case}
\end{figure}
It turns out that the set $\{\alpha,\beta,\alpha-\beta \}$ has a lot of symmetry which allows us to prove the following theorem.


\begin{theorem} \label{cycle intersection matrix}
Suppose that all of the edge weights are equal to a common value $\gamma$ and that $I_e = [-\pi/2,\pi/2]$ for all edges or $I_e = [\pi/2,3\pi/2]$ for all edges. Then
\begin{align*}
|W(A)| = \frac{N |\spt(G)|}{|\gamma|^2} \left(\frac{\pi^2}{2} + 2 \int_0^1 \frac{\sin^{-1} (1-\beta)}{\sqrt{1-\beta^2}} d\beta \right).
\end{align*}
\end{theorem}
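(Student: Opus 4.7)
The plan is to evaluate $|W(A)|$ via the identity $|W(A)|=\int_A |\det W'(\alpha,\beta)|\,d\alpha\,d\beta$, which is valid because $W$ is injective on $A$. With the chosen cycle basis and common edge weight $\gamma$, the components of $L(\alpha,\beta)$ take only the three values $\alpha/\gamma$, $\beta/\gamma$, $(\alpha-\beta)/\gamma$; let $n_1,n_2,n_3$ be their multiplicities among the edges. (Combinatorially these are the lengths of the three internally disjoint paths of the theta graph obtained from $G$ by smoothing all degree-two vertices.) Then
\begin{align*}
2\pi W_1 = n_1\arcsin(\alpha/\gamma)+n_3\arcsin((\alpha-\beta)/\gamma), \quad 2\pi W_2 = n_2\arcsin(\beta/\gamma)-n_3\arcsin((\alpha-\beta)/\gamma).
\end{align*}

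Differentiating and multiplying out, the two $n_3^2$ contributions involving $[1-(\alpha-\beta)^2/\gamma^2]^{-1}$ cancel exactly, leaving $\det W'$ as a non-negative sum of three terms with coefficients $n_1n_2$, $n_1n_3$, and $n_2n_3$. To show the three resulting integrals over $A$ all agree, I would use that the hexagon $A$ is invariant under the unimodular affine involutions $(\alpha,\beta)\mapsto(\alpha-\beta,-\beta)$ and $(\alpha,\beta)\mapsto(-\alpha,\beta-\alpha)$, which cyclically permute the triple $\{\alpha,\beta,\alpha-\beta\}$ up to signs that are absorbed by the squaring in the radicals. Combined with the elementary count $|\spt(G)|=n_1n_2+n_1n_3+n_2n_3$ for the theta graph (Kirchhoff, or directly: a spanning tree is obtained by deleting one edge from each of two distinct paths), the coefficient in front of the common integral collects cleanly as $|\spt(G)|$.

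All that remains is to evaluate the common integral, and for this I substitute $\alpha/\gamma=\sin u$, $\beta/\gamma=\sin v$. The $\cos u\cos v$ Jacobians cancel the two radicals, so the integral reduces to the planar area of $\{(u,v)\in[-\pi/2,\pi/2]^2:|\sin u-\sin v|\le 1\}$. The square has area $\pi^2$, and by the $(u,v)\mapsto(v,u)$ and $(u,v)\mapsto(-u,-v)$ symmetries it suffices to compute the area of the single corner $\{\sin u-\sin v>1\}$. Parameterizing that corner by $v\in[-\pi/2,0]$ with $u\in[\arcsin(1+\sin v),\pi/2]$ and then substituting $s=-\sin v\in[0,1]$ evaluates the corner area to $\tfrac{\pi^2}{4}-\int_0^1\arcsin(1-s)/\sqrt{1-s^2}\,ds$, yielding after doubling and subtracting from $\pi^2$ the stated closed form. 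The main technical care lies in the bookkeeping of the $(2\pi)^{-1}$ and $\gamma$ factors so as to match the precise prefactor $N|\spt(G)|/|\gamma|^2$ in the statement; the three conceptual ingredients — the cancellation in $\det W'$, the two hexagon symmetries, and the above plane-geometry computation — then fit together to give the theorem. The case $I_e=[\pi/2,3\pi/2]$ reduces to the first case because $\sin_{I_e}^{-1}(x)=\pi-\arcsin(x)$, so $W$ differs only by a translation and $|W(A)|$ is unchanged.
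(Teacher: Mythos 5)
Your proposal is correct in substance and reaches the result by a route that is partly the same and partly more self-contained than the paper's. The shared skeleton is identical: both start from $|W(A)|=\int_A|\det W'|$, both reduce the three cross-term integrals to a single one using a unimodular symmetry of the hexagon $A$ that permutes $\{\alpha,\beta,\alpha-\beta\}$ up to sign (the paper uses the order-three map $P(\alpha,\beta)=(\beta,-\alpha+\beta)$ and its square where you use two involutions generating the same action), and both end at the integral $\int_A \bigl((1-\alpha^2)(1-\beta^2)\bigr)^{-1/2}\,d\alpha\,d\beta$. Where you genuinely differ is the determinant: the paper invokes Theorem 2.8 of its reference \cite{MR3513871} (a general cycle-intersection-matrix/spanning-tree identity valid for any $c$), whereas you compute the $2\times 2$ determinant directly for the theta graph, observe the cancellation of the $n_3^2$ terms, and verify $|\spt(G)|=n_1n_2+n_1n_3+n_2n_3$ by hand. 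This buys a proof that does not lean on the external reference, at the cost of being special to $c=2$; the paper's route generalizes (and indeed is reused for the $c\ge 3$ discussion that follows). Your substitution $\alpha=\gamma\sin u$, $\beta=\gamma\sin v$ reducing the last integral to a plane area is also more explicit than the paper, which simply asserts the value.

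One point deserves attention: the prefactor. Your direct computation gives $\det(2\pi W')=n_1n_2fg+n_1n_3fh+n_2n_3gh$ with $f=(\gamma\sqrt{1-(\alpha/\gamma)^2})^{-1}$ etc., which after rescaling $A$ by $|\gamma|$ produces $|W(A)|=\tfrac{|\spt(G)|}{4\pi^2}\,I$ with no residual $\gamma$-dependence and no factor of $N$, whereas the theorem asserts $\tfrac{N|\spt(G)|}{|\gamma|^2}\,I$. The paper inherits its constant from the cited identity, whose normalization (the meaning of $N$ and the placement of the $2\pi$'s, which the paper itself drops when writing $(W')_{ij}$) cannot be checked from within this paper. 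You explicitly defer this bookkeeping, and it is the one step of your argument that is not yet closed; carrying it out would either confirm the stated constant or expose a normalization discrepancy in the theorem as printed. The rest of your argument --- the cancellation, the hexagon symmetries, the spanning-tree count, the corner-area evaluation, and the reduction of the $[\pi/2,3\pi/2]$ case via $\sin^{-1}_{I_e}(x)=\pi-\arcsin(x)$ --- is sound.
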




\begin{proof}
Since $W$ is injective on $A$ we have that $|W(A)| = \int_A |\det(W'(\alpha))| d\alpha$. By Theorem 2.8 of \cite{MR3513871} we have the identity
\begin{align*}
|\det(W'(\alpha))| = \frac{N}{|\gamma|^c} \sum_{T \in \spt(G)} \prod_{e \notin E_T} \frac{1}{\sqrt{1-L(\alpha)_e^2}}
\end{align*}
for any number of cycles. In the notation of \cite{MR3513871}, $W'(\alpha)$ is a cycle intersection matrix for the graph with edge weights $\gamma_e \sqrt{1-L(\alpha)_e^2}$. We will now argue that in the case of two cycles the integrals
\begin{align*}
I_T = \int_A \prod_{e \notin E_T} \frac{1}{\sqrt{1-L(\alpha)_e^2}} d\alpha
\end{align*}
are independent of the spanning tree. To see this notice that on the complement of any spanning tree $L(\alpha,\beta)$ takes exactly two distinct values from the set $\{\alpha,\beta,\alpha-\beta \}$. However, there exists a linear transformation $P(\alpha,\beta) = (\beta,-\alpha+\beta)$ which along with its square maps any two such values to any other two with at most a sign change. Moreover the determinant of $P$ has magnitude one. Thus we can use $P$ and its square to switch between spanning trees without changing the value of the integral. Thus to compute this integral we can choose a spanning tree whose complement contains an edge from $v_1$ not contained in $v_2$ and an edge from $v_2$ not contained in $v_1$. This results in the integral
\begin{align*}
I = \int_A \frac{d\alpha d\beta}{\sqrt{(1-\alpha^2)(1-\beta^2)}} = \frac{\pi^2}{2} + 2 \int_0^1 \frac{\sin^{-1} (1-\beta)}{\sqrt{1-\beta^2}} d\beta = 0.64368 \dots
\end{align*}
\end{proof}


Unfortunately the independence of the integral on the spanning tree does not seem to hold in general for graphs with more than two cycles. For example consider the graph in Figure 6 below along with two of its spanning trees.
\begin{figure}[H]
\captionsetup{font=scriptsize}
\center
\includegraphics[width=400px,height=150px]{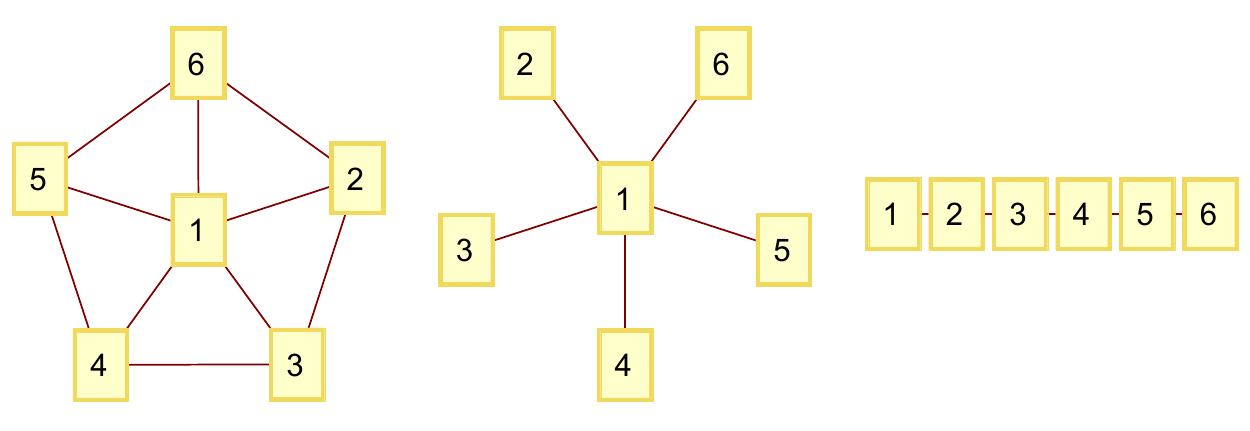}
\caption{$G$, $T_1$, and $T_2$}
\end{figure}
Then $I_{T_1} \approx 36 \ne 44 \approx I_{T_2}$. These numerical values were obtained with Mathematica by means of a Monte Carlo method with precision goal two. Of course we do have the inequalities
\begin{align*}
\frac{N |\spt(G)|}{|\gamma|^c} \min_{T \in \spt(G)} I_T \le |W(A)| \le \frac{N |\spt(G)|}{|\gamma|^c} \max_{T \in \spt(G)} I_T
\end{align*}
which hold in general.



\section{Acknowledgements}

The author gratefully acknowledges support under NSF grant DMS1615418.



\section{Appendix}

In this section we provide proofs of our theorems. We start with a proof of Theorem $\ref{equivalence}$ which requires the following lemma.


\begin{lemma} \label{lattice}
Let $v_1,\dots,v_c$ be a cycle basis. Then
\begin{align*}
\spn_\mathbb{Z}
\begin{pmatrix}
v_1
\\
\vdots
\\
v_c
\end{pmatrix}
= \mathbb{Z}^c
\end{align*}
where the span consists of all integral linear combinations of the columns of the matrix.
\end{lemma}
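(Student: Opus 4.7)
The plan is to reduce the statement to the case of a particularly simple cycle basis, the fundamental cycle basis of a spanning tree, and then transport the result to an arbitrary basis using the $SL_c(\mathbb{Z})$ action described in Section~\ref{main}.

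First I would fix a spanning tree $T \in \spt(G)$, which exists by hypothesis. The $c = |E| - |V| + 1$ edges $e_1,\dots,e_c$ outside $T$ give rise to the fundamental cycles $C_1,\dots,C_c$: $C_i$ is the unique cycle contained in $T \cup \{e_i\}$, oriented so that its $e_i$-component equals $+1$. These vectors form a basis of the cycle space. I would then form the $c \times |E|$ matrix $V$ whose rows are $C_1,\dots,C_c$, and observe that its $c \times c$ submatrix obtained by restricting to the columns indexed by $e_1,\dots,e_c$ is the identity, since $C_i$ has $e_j$-coefficient $\delta_{ij}$ (each non-tree edge appears only in its own fundamental cycle). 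Consequently the columns of $V$ already contain the standard basis vectors of $\mathbb{Z}^c$, so $\spn_\mathbb{Z}(V) = \mathbb{Z}^c$, proving the lemma for the fundamental basis.

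Next, for an arbitrary cycle basis $v_1,\dots,v_c$, I would invoke the fact, noted just after the definition of the cycle space in Section~\ref{main}, that any two integral cycle bases are related by an element $M \in SL_c(\mathbb{Z})$ (in fact, $GL_c(\mathbb{Z})$ suffices and is all we need). Writing $V'$ for the matrix with rows $v_1,\dots,v_c$, we then have $V' = M V$ (up to the convention on how $M$ is placed). Since $M \in GL_c(\mathbb{Z})$ is a $\mathbb{Z}$-linear automorphism of $\mathbb{Z}^c$, we have
\begin{equation*}
\spn_\mathbb{Z}(\text{columns of } V') \;=\; M \cdot \spn_\mathbb{Z}(\text{columns of } V) \;=\; M \cdot \mathbb{Z}^c \;=\; \mathbb{Z}^c,
\end{equation*}
which is the desired conclusion.

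I do not anticipate any serious obstacle; the only delicate point is recognizing that a cycle \emph{basis} in this paper means a $\mathbb{Z}$-basis of the integral cycle space (so changes of basis live in $GL_c(\mathbb{Z})$, not merely $GL_c(\mathbb{Q})$). If one wanted to avoid quoting the $SL_c(\mathbb{Z})$ fact entirely, an equivalent route is to argue that the integral cycle space is a direct summand of $\mathbb{Z}^E$ (being the kernel of the surjective incidence-type map onto its image, and $\mathbb{Z}^V / \im B$ being torsion-free for a connected graph), whence it is a free $\mathbb{Z}$-module of rank $c$ whose every basis automatically generates $\mathbb{Z}^c$ under the coordinate projection associated to any spanning tree.
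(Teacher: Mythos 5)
Your proposal is correct. It differs from the paper's argument in structure, though the two are close in spirit. The paper proceeds by induction on the number of independent cycles: starting from a single cycle, it adds one edge at a time together with a new basis vector traversing that edge exactly once, so that the new column of the matrix is $(0,\dots,0,\pm 1)^\top$ and the remaining columns project onto the columns of the previous matrix; the inductive hypothesis then gives $\mathbb{Z}^{c+1}$. Your argument is the non-inductive, ``closed form'' of this: by fixing a spanning tree and taking the fundamental cycle basis, the $c \times c$ submatrix on the non-tree columns is already the identity, so the standard basis vectors appear among the columns in one step. Both proofs rest on the same reduction --- that the conclusion is invariant under the $GL_c(\mathbb{Z})$ (the paper says $SL_c(\mathbb{Z})$, but as you note only invertibility over $\mathbb{Z}$ matters) change of cycle basis --- so neither is more self-contained than the other on that point; your version buys a shorter and more transparent verification of the special case, while the paper's induction avoids having to name the fundamental cycle basis explicitly. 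Your closing remark correctly identifies the one genuinely delicate issue, namely that ``basis'' must mean a $\mathbb{Z}$-basis of the integral cycle lattice (which the fundamental cycles indeed form, by the same identity-submatrix observation); with that understood, the argument is complete.
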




\begin{proof}
We start by making two simple observations. First, notice that the lemma is independent of the particular choice of cycle basis since any two cycle basis are related by an element of $SL_c(\mathbb{Z})$. Second, the lemma is trivial for graphs with a single cycle. From these two observations we see that the lemma will follow by induction once we show that a graph with a cycle basis $v_1, \dots, v_c$ satisfying the lemma implies that a graph with one more edge and one new cycle basis vector transversing this new edge exactly once satisfies the lemma.

In other words the new cycle basis $v_1',\dots, v_{c+1}'$ satisfies $v_i' = (v_i,0)$ for $i \in \{1,\dots,c\}$ and $v_{c+1}' = (v_{c+1}, \pm1)$ for some integral $v_{c+1}$. Thus
\begin{align*}
\spn_\mathbb{Z}
\begin{pmatrix}
v_1'
\\
\vdots
\\
v_{c+1}'
\end{pmatrix}
=
\spn_\mathbb{Z}
\begin{pmatrix}
v_1 & 0
\\
\vdots
\\
v_c & 0
\\
v_{c+1} & \pm 1
\end{pmatrix}
= \mathbb{Z}^{c+1}
\end{align*}
by the inductive hypothesis which completes the proof.
\end{proof}



\begin{proof}[Proof of Theorem $\ref{equivalence}$]
We start by noting that $\eqref{model}$ is equivalent to the equation $\omega = BD \sin B^\top \theta$. Recalling that a cycle basis is also a basis for $\ker(B)$ we easily find that $\theta$ is a solution of $\eqref{model}$ if and only if $\sin B^\top \theta = L(\alpha)$ for some $\alpha \in \R^c$. Since $\theta_e \in I_e + 2\pi \mathbb{Z}$ we see that in fact $\alpha \in A$ and also that $B^\top \theta = \sin_\mathcal{I}^{-1} L(\alpha) - 2\pi K$ for some $K \in \mathbb{Z}^E$. Note that $\theta$ is uniquely determined up to translation by $\alpha$. Of course this equation holds if and only if $\frac{1}{2\pi} \langle v_i, \sin_\mathcal{I}^{-1} L(\alpha) \rangle = \langle v_i, K \rangle$. In other words $\theta$ is a solution of $\eqref{model}$ if and only if there exists an $\alpha \in A$ such that
\begin{align}
W(\alpha) \in \spn_\mathbb{Z}
\begin{pmatrix}
v_1
\\
\vdots
\\
v_c
\end{pmatrix}
= \mathbb{Z}^c
\end{align}
where the equality holds by Lemma $\ref{lattice}$.
\end{proof}


Now we prove Theorem $\ref{polytope}$.


\begin{proof}[Proof of Theorem $\ref{polytope}$]
We do this by induction. If $G$ has only one cycle the result is obvious. Thus let $G$ be a graph with at least two cycles. If we choose the cycle basis for $G$ to be a  fundamental cycle basis, then there exists two edges $e'$ and $e''$ which are only transversed by exactly one member of the cycle basis. This allows us to define two subgraphs $G'$ and $G''$ which are obtained from $G$ by deleting the edges $e'$ and $e''$ respectively. We will show by applying the inductive hypothesis to $G'$ that the intersection of $A$ with the hyperplane $L(\alpha)_e = h_e \in \{-1,1\}$ has non-empty interior for all edges $e \ne e'$. By applying the same reasoning to the subgraph $G''$ we will complete the inductive step. To do this let $A$ and $A'$ be the corresponding polytopes and $L'(\alpha)$ and $L(\alpha,\beta)$ be the corresponding linear functions defined on these polytopes. Furthermore let $v$ denote the cycle basis vector which transverses $e'$ only once and completes a cycle basis for $G$ when added to the cycle basis for $G'$. Note that we can assume that $v_e \in \{-1,0,1\}$ and therefore it suffices to consider the two cases $v_e = 0$ and $v_e \in \{-1,1\}$.

We first consider the case when $v_e = 0$ which implies that $L(\alpha,\beta)_e = L'(\alpha)_e$. Let $\epsilon > 0$ be a small parameter and define the set $C_\epsilon= \{ \alpha \in A' : L'(\alpha)_e = h_e, \min_{e^* \ne e} |L'(\alpha)_{e^*} \pm 1| \ge \epsilon \}$. By the inductive hypothesis we know that this set has non-empty interior for all sufficiently small $\epsilon$. Now the set $C_\epsilon \times [-\epsilon,\epsilon]$ lies in the intersection of $A$ and the hyperplane $L(\alpha,\beta)_e = h_e$ and has non-empty interior.

Next we consider the case when $v_e \in  \{-1,1\}$ which implies that $L(\alpha,\beta)_e = L'(\alpha)_e + v_e \beta$. Define $\beta(\alpha) = v_e(h_e - L'(\alpha)_e)$, then the graph $(\alpha,\beta(\alpha))$ lies on the hyperplane $L(\alpha,\beta)_e = h_e$. Define the set $C = \{ \alpha \in A' : |L'(\alpha)_e - h_e| \le \min_{e^* \neq e} |L'(\alpha)_{e^*} \pm 1| \}$. Note that this set has non-empty interior by the inductive hypothesis. If $v_{e^*} = 0$, then $L(\alpha,\beta(\alpha))_{e^*} \in [-1,1]$ trivially. Thus suppose that $v_{e^*} \in \{-1,1\}$. Then on $C$ we have that
\begin{align*}
1-h_eL'(\alpha)_e \le 1-v_e h_e v_{e^*} L'(\alpha)_{e^*}
\end{align*}
which implies that
\begin{align*}
L(\alpha,\beta(\alpha))_{e^*} = v_{e^*}(v_e h_e -v_e L'(\alpha)_e + v_{e^*} L'(\alpha)_{e^*}) \in [-1,1]
\end{align*}
by considering the two cases $v_e h_e \in \{-1,1\}$. Therefore the graph $(\alpha,\beta(\alpha))$ on $C$ lies in the intersection of $A$ and the hyperplane $L(\alpha,\beta)_e = h_e$ and has non-empty interior.
\end{proof}


We now prove Theorem $\ref{asymptotics}$.


\begin{proof}
First notice that $A_M = A$. By construction we have that $W_M(\alpha) = MW_r(\alpha) + E_M(\alpha)$ where $\sup_{\alpha \in A} |E_M(\alpha)| = o(M)$. Therefore
\begin{align*}
\#(W_M(A) \cap \Z^c) = \#(\{ W_r(\alpha) + E_M(\alpha)/M : \alpha \in A\} \cap (\frac{1}{M}\Z)^c )
\end{align*}
If we let $R_M$ denote a Riemann sum for the indicator function of the set $\{ W_r(\alpha) + E_M(\alpha)/M : \alpha \in A\}$ with mesh $(\frac{1}{M}\Z)^c$, then the above quantity equals $R_M M^c$. Thus it suffices to show that $R_M$ approaches $|W_r(A)|$ as $M$ tends to infinity. Notice that the boundary $\partial W_r(A)$ is compact and that
\begin{align*}
|R_M - |W_r(A)|| \le |\{ \beta : \dist(\beta,\partial W_r(A)) \le \sup_{\alpha \in A} |E_M(\alpha)|/M + \sqrt{c}/M \}|.
\end{align*}
where $\dist(\beta,W_r(A)) = \inf_{\alpha \in A} |\beta - W_r(\alpha)|$. Thus we obtain our desired result since the quantity bounding the distance between $\beta$ and $\partial W_r(A)$ tends to zero.

\end{proof}


Finally we prove our last theorem which is Theorem $\ref{angle differences}$.


\begin{proof}

As noted before we have that $|W(A)| = \int_A |\det(W'(\alpha)| d\alpha$ since $W$ is injective on $A$. If we define $\epsilon_e(\alpha) \in \{-1,1\}$ such that
\begin{align*}
\frac{d}{d\alpha} \sin_{I_e}^{-1} L(\alpha)_e = \frac{\epsilon_e(\alpha)}{\sqrt{1-L(\alpha)_e^2}} \quad \text{then} \quad (W')_{ij} = \sum_{e \in E} \frac{(v_i)_e (v_j)_e}{\epsilon_e(\alpha) \gamma_e \sqrt{1-L(\alpha)_e^2}}.
\end{align*}
Again $W'(\alpha)$ is a cycle intersection matrix in the notation of \cite{MR3513871} with edge weights $\epsilon_e(\alpha) \gamma_e \sqrt{1-L(\alpha)_e^2}$. Therefore by Theorem 2.8 of \cite{MR3513871} we find that
\begin{align*}
\det(W'(\alpha)) = N \sum_{T \in \spt(G)} \prod_{e \notin E_T} \frac{1}{\epsilon_e(\alpha) \gamma_e \sqrt{1-L(\alpha)_e^2}}.
\end{align*}
Thus the absolute value of the determinant is maximized when $\prod_{e \notin E_T} \epsilon_e(\alpha) \gamma_e$ has the same sign for all spanning trees $T$. It is not difficult to convince oneself that this implies that $\epsilon_e(\alpha) \gamma_e$ must have the same sign for any edge $e$ which is in the complement of some spanning tree $T$. By assumption this is all of $E$ and therefore for each $\alpha$ we see that $\epsilon_e(\alpha) \gamma_e$ must have a fixed sign for all edges $e$. Finally we easily see that this occurs when the intervals $I_e$ are chosen as given in the theorem.
\end{proof}




\bibliography{TopologicalStatesInTheKuramotoModel}

\begin{thebibliography}{10}

\bibitem{MR3513871}
Jared~C. Bronski, Lee DeVille, and Timothy Ferguson.
\newblock Graph homology and stability of coupled oscillator networks.
\newblock {\em SIAM J. Appl. Math.}, 76(3):1126--1151, 2016.

\bibitem{Buck1988SynchronousRhythmicFlashing}
J.~Buck.
\newblock {Synchronous Rhythmic Flashing of Fireflies. II.}
\newblock {\em The Quarterly Review of Biology}, 63(3):265--289, 1988.

\bibitem{MR3388491}
Taras Girnyk, Martin Hasler, and Yuriy Maistrenko.
\newblock Multistability of twisted states in non-locally coupled
  {K}uramoto-type models.
\newblock {\em Chaos}, 22(1):013114, 10, 2012.

\bibitem{PhysRevA.52.4089}
S.~Yu. Kourtchatov, V.~V. Likhanskii, A.~P. Napartovich, F.~T. Arecchi, and
  A.~Lapucci.
\newblock Theory of phase locking of globally coupled laser arrays.
\newblock {\em Phys. Rev. A}, 52:4089--4094, Nov 1995.

\bibitem{MR762432}
Y.~Kuramoto.
\newblock {\em Chemical oscillations, waves, and turbulence}, volume~19 of {\em
  Springer Series in Synergetics}.
\newblock Springer-Verlag, Berlin, 1984.

\bibitem{MR3129708}
Georgi~S. Medvedev.
\newblock Small-world networks of {K}uramoto oscillators.
\newblock {\em Phys. D}, 266:13--22, 2014.

\bibitem{MR3415044}
Georgi~S. Medvedev and Xuezhi Tang.
\newblock Stability of twisted states in the {K}uramoto model on {C}ayley and
  random graphs.
\newblock {\em J. Nonlinear Sci.}, 25(6):1169--1208, 2015.

\bibitem{MR3600363}
Georgi~S. Medvedev and J.~Douglas Wright.
\newblock Stability of {T}wisted {S}tates in the {C}ontinuum {K}uramoto
  {M}odel.
\newblock {\em SIAM J. Appl. Dyn. Syst.}, 16(1):188--203, 2017.

\bibitem{peskin75}
Charles~S. Peskin.
\newblock {\em Mathematical aspects of heart physiology}.
\newblock Courant Institute of Mathematical Sciences, New York University, New
  York, NY, USA, 1975.

\bibitem{MR1783382}
Steven~H. Strogatz.
\newblock From {K}uramoto to {C}rawford: exploring the onset of synchronization
  in populations of coupled oscillators.
\newblock {\em Phys. D}, 143(1-4):1--20, 2000.
\newblock Bifurcations, patterns and symmetry.

\bibitem{MR2878025}
Richard Taylor.
\newblock There is no non-zero stable fixed point for dense networks in the
  homogeneous {K}uramoto model.
\newblock {\em J. Phys. A}, 45(5):055102, 15, 2012.

\bibitem{MR2220552}
Daniel~A. Wiley, Steven~H. Strogatz, and Michelle Girvan.
\newblock The size of the sync basin.
\newblock {\em Chaos}, 16(1):015103, 8, 2006.

\end{thebibliography}
\bibliographystyle{plain}



\end{document}